\newtheorem{theorem}{Theorem}
\newtheorem{conjecture}[theorem]{Conjecture}
\newtheorem{claim}{Claim}
\newenvironment{proof}{\noindent {\bf
		Proof.}}{\rule{2.5mm}{2.5mm}\par\medskip}
\title{\large{\bf Partitioning triangle-free planar graphs into a forest and a linear forest}}
\author{
	Guanwu Liu\thanks{School of Mathematics and Statistics, Shandong University of Technology, Zibo, Shandong, 255000, China. 
		Research supported by the National Natural Science Foundation of China (No.~12401459) and by the Natural Science Foundation of Shandong Province of China (ZR2024QA075). 
		Email: \texttt{liuguanwu@hotmail.com}} 
	\quad 
	Rongxing Xu\thanks{School of Mathematical Sciences, Zhejiang Normal University, Jinhua, Zhejiang, 321000, China. Research supported by the National Natural Science Foundation for Young Scientists of China (Grant No.~12401472), and the Zhejiang Provincial Natural Science Foundation of China (Grant No.~LQN25A010011). 
		Email: \texttt{xurongxing@zjnu.edu.cn}}
}
\begin{document}
	\maketitle
	
	\begin{abstract}
		Raspaud and Wang conjectured that every triangle-free planar graph can be vertex-partitioned into an independent set and a forest. Independently, Kawarabayashi and Thomassen also remarked that this might be true, after providing another proof of a result of Borodin and Glebov, showing this result for planar graphs of girth~5. Subsequently, Dross, Montassier, and Pinlou raised the same question and proved that every triangle-free planar graph can be partitioned into a forest and another forest of maximum degree~5. More recently, Feghali and \v{S}\'{a}mal improved this bound on the maximum degree to~3. In this note, we further improve the result by showing that every triangle-free planar graph can be partitioned into a forest and a linear forest, that is, a forest of maximum degree~2.
	\end{abstract}
	
	\section{Introduction}

	All graphs considered in this paper are finite and simple.
	A \emph{linear forest} is a forest in which every connected component is a path.
	We say that a graph $G$ can be partitioned into graphs $H_1, H_2, \ldots, H_k$ if there exists a partition $(V_1, V_2, \ldots, V_k)$ of $V(G)$ such that each $V_i$ induces a subgraph of $G$ isomorphic to $H_i$.
	
	The problem of partitioning a graph into subgraphs with simpler structure has attracted considerable attention in graph theory.
	Typical examples of such structures include independent sets (corresponding to proper colorings), forests (related to the vertex arboricity of a graph \cite{CKW1968}), linear forests, and, more generally, subgraphs of bounded degeneracy.
	
	The celebrated Four-Color Theorem \cite{AH1977,AHK1977} asserts that every planar graph can be partitioned into four independent sets. Thomassen \cite{Thomassen2001} proved that every planar graph can be partitioned into an independent set and a 3-degenerate graph. 
	Chartrand, Kronk and Wall \cite{CKW1968} proved that every planar graph can be partitioned into three forests (it follows easily from the fact that every planar graph is $5$-degenerate). This result was subsequently strengthened in several directions. Poh \cite{Poh1990} proved that every planar graph can be partitioned into three linear forests.
	Thomassen \cite{Thomassen1995} showed that the vertex set of every planar graph can be partitioned into a forest and a 2-degenerate graph, note that every 2-degenerate graph can be partitioned into two forests (by greedy algorithm).
	Borodin \cite{Borodin1979} proved that every planar graph admits an acyclic coloring with at most five colors, where an acyclic coloring is a proper coloring in which every two color classes induce a forest; this immediately implies that every planar graph can be partitioned into an independent set and two forests.
	
	For triangle-free planar graphs, Gr\"{o}tzsch's theorem \cite{Grotzsch} asserts that every such graph is 3-colorable, that is, it can be partitioned into three independent sets.
	Moreover, by Euler’s formula, every triangle-free planar graph has a vertex of degree at most three, and it follows easily that every triangle-free planar graph can be partitioned into two forests.
	Raspaud and Wang \cite{RW2008} conjectured that every triangle-free planar graph can be partitioned into an independent set and a forest.
	Independently, Kawarabayashi and Thomassen \cite{KT2009} also remarked that this might be true, after providing another proof of an earlier result of Borodin and Glebov \cite{BG2001}, who showed that every planar graph of girth at least five can be partitioned into an independent set and a forest.
	
	\begin{conjecture}[\cite{KT2009,RW2008}]
		\label{Conj-IF-triangle}
		Every triangle-free planar graph can be partitioned into an independent set and a forest.
	\end{conjecture}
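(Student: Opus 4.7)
The plan is to adopt the classical minimum-counterexample plus discharging paradigm that has driven most progress on Gr\"otzsch-type problems. Suppose for contradiction that there exists a triangle-free planar graph admitting no partition into an independent set and a forest, and fix such a counterexample $G$ of minimum order. The high-level goal is to prove that $G$ must contain some local configuration (a ``reducible configuration'') which can be contracted or deleted to obtain a smaller counterexample, and then to show via discharging on a planar embedding that no triangle-free planar graph can avoid all such configurations.

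First I would carry out basic reductions. If $G$ is disconnected or has a cut vertex, partitions of the (smaller) blocks can be combined, so $G$ is $2$-connected. A vertex $v$ of degree at most $1$ is easily handled: given a partition $(I',F')$ of $G-v$, put $v$ in $I'$ when its unique neighbour lies in $F'$ and in $F'$ otherwise. For $\deg(v)=2$ with neighbours $u_1,u_2$: if both $u_i$ lie in $I'$ or both in $F'$, place $v$ in the opposite part; if the $u_i$ lie in different parts, place $v$ in $F'$, where it becomes a leaf (so no cycle is created and independence is preserved). Since every triangle-free planar graph has a vertex of degree at most $3$ (by Euler's formula with girth $\geq 4$), we conclude $\delta(G)=3$.

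The technical heart is the analysis of a $3$-vertex $v$ with neighbours $u_1,u_2,u_3$. Given $(I',F')$ for $G-v$, the only way $v$ cannot be inserted directly is when at least one $u_i$ lies in $I'$ (blocking $I$) \emph{and} at least two of them lie in a common tree of $F'$ (so adding $v$ to $F'$ closes a cycle). In this ``bad'' case one attempts a \emph{recolouring}: swap a carefully chosen vertex along the $u_ju_k$-path in its tree between $I'$ and $F'$, or shift the part of a neighbour of $u_i$, to obtain a different base partition in which $v$ extends. Repeated application of such local swaps, together with the $2$-connectivity and $\delta=3$ properties, should yield a list of forbidden neighbourhood patterns around $3$-vertices: adjacency to only low-degree vertices, certain short face-cycles through $v$, and so on. Finally I would run a discharging argument on a planar embedding: assign initial charge $\deg(v)-4$ to each vertex and $|f|-4$ to each face (total $-8$ by Euler, and every face has length $\geq 4$ since $G$ is triangle-free), then define redistribution rules that push charge from high-degree vertices and long faces toward $3$-vertices, and derive a contradiction from non-negativity of the final charges.

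The main obstacle — and the reason this conjecture has remained open for nearly two decades — is step three: establishing enough reducible configurations. The recolouring around a $3$-vertex is fragile because a single swap can destroy a property elsewhere, and cascading changes are hard to control without additional global structure. This is precisely why prior work (Dross--Montassier--Pinlou, Feghali--\v{S}\'amal, and the present paper) settles only the relaxation where the independent set is replaced by a forest of small maximum degree: allowing the ``small'' side to have a few edges gives enough flexibility for the local recolouring to succeed. I expect that a full proof of Conjecture~\ref{Conj-IF-triangle} would require either a strengthened inductive hypothesis that records partition information at the boundary (in the style of Thomassen's $5$-choosability proof), or a non-local recolouring argument — neither of which falls out of the standard discharging template.
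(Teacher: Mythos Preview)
Your proposal is not a proof, and more to the point, the paper does not contain a proof of this statement either: Conjecture~\ref{Conj-IF-triangle} is stated as an \emph{open conjecture}, and the paper's main result (Theorem~\ref{main-thm0}) is the strictly weaker assertion that every triangle-free planar graph partitions into a forest and a linear forest. So there is no ``paper's own proof'' to compare your attempt against.

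As for the content of your sketch itself, you correctly identify the genuine obstruction. Your reductions for $\delta(G)\le 2$ are fine, but the step for a $3$-vertex $v$ is where the argument stops being a proof and becomes a wish. Concretely: when two neighbours $u_j,u_k$ of $v$ lie in the same tree of $F'$ and a third neighbour is in $I'$, your proposed ``swap a carefully chosen vertex along the $u_ju_k$-path'' has no mechanism to guarantee that the swapped vertex has no $I'$-neighbour (so you may destroy independence), nor that removing it from $F'$ actually disconnects $u_j$ from $u_k$ in the forest (the path through the swapped vertex need not be unique once you start cascading swaps). You acknowledge this yourself in your final paragraph. Without a concrete list of reducible configurations and verified reduction rules, the discharging half of the plan cannot even be set up, so what you have is a description of the standard paradigm rather than a proof. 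This is consistent with the status of the problem: the conjecture has been open since 2008--2009, and the paper you are reading improves the best known relaxation (from maximum degree~$3$ to maximum degree~$2$ on the small side) precisely via the kind of strengthened boundary-inductive hypothesis you allude to at the end --- but even that machinery does not reach the independent-set case.
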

	
	Conjecture~\ref{Conj-IF-triangle}, if true, would imply many other results such as Gr\"{o}tzsch's theorem, and would give positive answers to Question~28 in \cite{Zhu2020} and Question~2 in \cite{CY2017}, which concern list colorings of triangle-free planar graphs with restrictions on the list assignments.
	
	Subsequently, Dross, Montassier, and Pinlou \cite{DMP2017} posed exactly the same problem, and further asked for the minimum integer $d$ such that every triangle-free planar graph can be partitioned into a forest and a forest of maximum degree $d$.
	In the same paper, they showed that $d \leq 5$.
	Note that an independent set is a forest of maximum degree~0.
	It is also worth mentioning that Montassier and Ochem \cite{MO2015} proved that not every triangle-free planar graph can be partitioned into two graphs of bounded degree. 
	Recently, Feghali and \v{S}\'{a}mal \cite{FS2024} improved the bound by showing that $d \leq 3$ using list-coloring techniques.
	In this paper, we further improve this to $d \leq 2$.
	
	\begin{theorem}
		\label{main-thm0}
		Every triangle-free planar graph can be partitioned into a forest and a linear forest.
	\end{theorem}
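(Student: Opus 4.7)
The plan is to proceed via a minimum counterexample combined with Euler's formula and a discharging argument. Let $G$ be a triangle-free planar graph of smallest order that does \emph{not} admit a partition into a forest $F$ and a linear forest $L$. I would aim to derive a contradiction by identifying a family of unavoidable ``reducible configurations'' that $G$ cannot contain, and then using discharging to force $G$ to contain one of them.

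Because induction on $|V(G)|$ alone is generally too weak to propagate a partition across local modifications, I would first formulate and prove a stronger statement amenable to induction. A natural candidate is a list-style strengthening in the spirit of the Feghali--\v{S}\'{a}mal argument: to each vertex $v$ assign a list $\ell(v) \subseteq \{F,L\}$, optionally together with a partial precoloring of a short boundary walk on the outer face, and require that every such assignment satisfying mild local conditions extends to a valid partition of the whole graph. This is the standard device in Thomassen-type proofs and is what allows one to ``close up'' the partition after removing a small reducible configuration.

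The core structural work is to exhibit reducible configurations. As a warm-up one verifies that the minimum degree is at least $3$: a vertex of degree at most $1$ is absorbed into $L$ trivially, and a degree-$2$ vertex is handled by a short case analysis that uses triangle-freeness to avoid creating a cycle in $F$ or a path of length $3$ in $L$. The more delicate configurations involve $3$-vertices with ``light'' neighborhoods: two adjacent $3$-vertices, a $3$-vertex incident to a $4$-face whose opposite vertices are also of low degree, and similar patterns of $3$-vertices clustered around short faces. In each case one deletes or contracts the configuration, invokes the strengthened inductive hypothesis on the smaller graph, and verifies that every returned partition can be completed to $G$. The tighter constraint compared with the $d \leq 3$ result enters precisely here, because in $L$ every vertex may have at most two $L$-neighbors and no $L$-cycle, so a $3$-vertex $v$ placed in $L$ cannot have any two $L$-neighbors already joined by an $L$-path.

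The final phase is discharging. Assign each vertex $v$ initial charge $d(v)-4$ and each face $f$ charge $|f|-4$; by Euler's formula and triangle-freeness (so $|f|\ge 4$), the total charge equals $-8$. After choosing local discharging rules that redistribute charge from long faces to $3$-vertices, one argues that in the absence of the reducible configurations every vertex and every face ends with nonnegative charge, contradicting the negative total. I expect the principal obstacle to be the reducibility analysis for a $3$-vertex $v$ all of whose neighbors are themselves of small degree: after deleting $v$, the $\{F,L\}$-status of the three neighbors in the smaller graph may simultaneously obstruct both choices for $v$, and ruling this out is precisely where the list-coloring strengthening must be tailored enough to close the remaining gap left by the $d \leq 3$ result of Feghali and \v{S}\'{a}mal.
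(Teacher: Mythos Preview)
What you have written is a strategy outline, not a proof: you neither state the strengthened inductive hypothesis precisely, nor exhibit a single reducible configuration with a complete reduction argument, nor specify any discharging rules. You yourself flag the crux --- a $3$-vertex all of whose neighbours are light --- and then say only that ``the list-coloring strengthening must be tailored enough'' to handle it. That is exactly the content of the theorem, and it is left entirely open in your proposal. A discharging scheme with charges $d(v)-4$ and $|f|-4$ leaves every $3$-vertex at $-1$ and every $4$-face at $0$; in a triangle-free planar graph there is no obvious source of surplus to feed the $3$-vertices unless you have already established strong structural restrictions, which you have not.

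The paper's proof follows a different route and involves no discharging at all. It formulates an explicit Thomassen-type strengthening (Theorem~\ref{main-thm}): one precolours up to three consecutive boundary vertices $P$, forces an independent boundary set $Q$ into the forest class, and imposes five carefully chosen compatibility conditions (C1)--(C5), including a ``no monochromatic path between specified boundary vertices'' clause and a degree bound at a designated boundary vertex $z$. The proof of this strengthening is a minimum-counterexample argument that works entirely along the outer boundary: one shows $G$ is $2$-connected, that $B_G$ has essentially no chords or $2$-chords into $Q$, that there is no separating $4$-cycle, and then a short sequence of claims about the pattern of $P$, $Q$, and $z$ along $B_G$ forces a contradiction. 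The key idea you are missing is the precise form of this strengthened statement; once it is written down, the reductions are local boundary manipulations rather than a global discharging count.
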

	
	We now fix some notation to be used throughout the paper.    
	Let $G$ be a graph and let $\phi: V(G) \to \{1,2,\ldots, k\}$ be a (not necessarily proper) coloring of $V(G)$.  
	For $i \in \{1,2,\ldots, k\}$, we write $\phi^{-1}_{G}(i)$ for the set of vertices of $G$ colored $i$ by $\phi$.  
	When the underlying graph is clear from the context, we simply write $\phi^{-1}(i)$.  
	For a subset $U \subseteq V(G)$, we use $\phi|_{U}$ to denote the restriction of $\phi$ to $U$, and $G[U]$ to denote the subgraph of $G$ induced by $U$, while $G-U$ denotes the subgraph of $G$ induced by $V(G)\setminus U$. For two colorings $\phi$ and $\psi$, we write $\phi \cup \psi$ for their union, whenever they are consistent on the intersection of their domains.
	If $H$ is a subgraph of $G$, then $G \setminus E(H)$ denotes the graph obtained from $G$ by deleting all edges of $H$ but keeping all vertices.  
	If $G$ is a planar graph, we denote by $B_G$ the boundary cycle of the outer face of $G$.  
	For a vertex $u \in V(B_G)$, a vertex $v \in V(B_G)$ is called a \emph{boundary neighbor} of $u$ if $uv \in E(B_G)$.

	\section{The proof of Theorem \ref{main-thm0}}
	
	Instead of proving Theorem~\ref{main-thm0} directly, we establish the following stronger result, which is inspired by the main idea of \cite{FS2024} but stated in a stronger form.
	
	\begin{theorem}\label{main-thm}
		Let $G$ be a triangle-free plane graph, and let $z$ be a boundary vertex of $G$.
		Suppose $P$ and $Q$ are disjoint (possibly empty) vertex sets on the boundary of $G$ such that
		$z \notin P\cup Q$, $|P|\leq 3$, the vertices of $P$ appear consecutively on the boundary of $G$,
		and $Q$ is an independent set.
		Let $\eta: P \to \{1,2\}$ be a precoloring of $P$ such that if $|P|=3$, then $\eta$ assigns color $1$ to exactly one endpoint of $G[P]$ and color $2$ to the other two vertices.
		Then there exists a coloring $\phi: V(G) \to \{1,2\}$ such that the following hold.
		\begin{enumerate}[label={(C\arabic*)},leftmargin=1.3cm]
			\item \label{C1} $\phi(u) = \eta(u)$ for each $u \in P$ and $\phi(u)=2$ for each $u \in Q$.
			\item \label{C2} The vertices of color $1$ induce a linear forest, and the vertices of color $2$ induce a forest.  
			\item \label{C3} If $\eta(u) = 1$ for some $u \in P$, then for any $v \in N_{G}(u) \setminus P$, $\phi(v)=2$.		
			\item \label{C4} If $|P| \leq 2$, $\eta(u) = 1$ for exactly one vertex $u \in P$ and $v \in N_{G}(u)\cap Q$, then for every boundary neighbor $w \neq v$ of $u$, there is no $\phi$-monochromatic path joining $w$ and $v$.
			\item \label{C5} If $\phi(z)=1$, then there is at most one neighbor of $z$ which is also colored $1$ in $\phi$.
		\end{enumerate}
		A coloring of $G$ satisfying \ref{C1}-\ref{C5} is said {\rm valid} for $(G,P,Q,z,\eta)$.
	\end{theorem}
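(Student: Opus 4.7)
The plan is to prove Theorem~\ref{main-thm} by induction on $|V(G)|$, mimicking the Thomassen-style precoloring-extension framework used in \cite{FS2024} but strengthened to handle the linear-forest requirement. The base cases (say $|V(G)|\le 4$) are handled by checking directly; for the inductive step we assume $(G,P,Q,z,\eta)$ is a minimum counterexample and aim to exhibit a reducible configuration.

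First, I would carry out standard reductions to force $G$ to be $2$-connected with $B_G$ a cycle and without separating chords. For each such violation, we cut $G$ into two smaller plane graphs along the cut vertex or chord, distribute the data $(P,Q,z,\eta)$ between them (precoloring the shared vertices consistently with $\eta$ and with each other), apply the induction hypothesis to each piece, and paste the colorings. The conditions \ref{C1}, \ref{C3}, and \ref{C5} are local enough to verify after pasting; the delicate point is checking \ref{C2} and \ref{C4} across the cut, i.e.\ that no color-$2$ cycle, no color-$1$ path of length $\geq 3$, and no forbidden monochromatic path is created through the shared vertices.

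Next, I would normalize the precoloring. When $|P|\le 2$, one tries to extend $P$ by adjoining a boundary neighbor of $P$ or $z$ (assigned the color forced by \ref{C3}--\ref{C5}) and apply induction to the enlarged instance; after at most two such augmentations, we may assume $|P|=3$ with the pattern $(2,1,2)$ along the boundary. For this normalized instance, the heart of the argument is locating a reducible configuration near $P\cup Q\cup\{z\}$. Using triangle-freeness (so $|E(G)|\le 2|V(G)|-4$ and the boundary carries most of the ``weight''), we find either a low-degree boundary vertex, a short boundary path, or a vertex with few non-boundary neighbors, adjacent to $P\cup Q$ in a controlled way. We then delete or color this configuration, absorb it into an augmented $(P',Q')$ of the smaller instance, invoke induction, and lift the resulting coloring back to $G$.

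The main obstacle, as in the weaker results \cite{DMP2017,FS2024}, is guaranteeing that the coloring returned by the inductive hypothesis still satisfies the global conditions \ref{C2} and \ref{C4} after being lifted. The linear-forest requirement is strictly more demanding than the bounded-degree requirement of \cite{FS2024}: in color $1$ we must rule out not only vertices of degree $\ge 3$ but also paths of length $\ge 3$ and cycles. This is precisely the role of condition \ref{C5} (which keeps $z$ ``safe'' with at most one color-$1$ neighbor) and of condition \ref{C4} (which forbids a monochromatic path from a specified boundary neighbor $w$ to a specified $Q$-vertex $v$). Choreographing these global constraints through the case analysis --- ensuring that the reduced instance still satisfies $|P'|\le 3$, that $Q'$ is independent, and that no forbidden monochromatic path is introduced between the designated vertices --- will be the most delicate part of the proof, and will likely force several case-specific reducible configurations beyond those used in \cite{FS2024}.
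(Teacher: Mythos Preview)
Your outline captures the Thomassen-style shell that the paper also uses (minimal counterexample, cut at cut-vertices, chords, separating $4$-cycles), but it contains two misreadings and misses the central idea that drives the paper's argument. First, when $|P|=3$ the required pattern is $(1,2,2)$ along the boundary (color~$1$ sits on an \emph{endpoint} of $G[P]$), not $(2,1,2)$; this matters, since the chord-reduction step in the paper hinges on the middle vertex of $P$ being colored~$2$. Second, a linear forest allows paths of arbitrary length; what must be excluded in color~$1$ is vertices of degree~$\ge 3$ and cycles, not ``paths of length~$\ge 3$''.

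The more serious gap is strategic. You propose to normalize to $|P|=3$ and then hunt for a reducible configuration via Euler's formula and low-degree boundary vertices, in the spirit of \cite{DMP2017,FS2024}. The paper does neither. Instead, the minimal counterexample is chosen with $|V(G)|$ smallest \emph{and then $|Q|$ largest}. This secondary extremality is the whole engine: after excluding chords and $2$-chords touching $Q$, any boundary vertex $u\notin P\cup Q$ whose boundary neighbors are already forced to color~$2$ can be deleted, colored~$1$, and have $N_G(u)$ absorbed into $Q'$; the $|Q|$-maximality then says such $u$ cannot exist unless a boundary neighbor obstructs the reduction. A short chain of these obstructions localized around $z$ and $P$ produces the contradiction, with no discharging, no degree counting, and no Euler-formula step at all. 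Your plan gives no hint of this device, and the Euler-based endgame you sketch is left entirely unspecified; absent the $|Q|$-maximality trick, it is unclear how you would simultaneously control the independence of the enlarged $Q'$, the global path condition \ref{C4}, and the degree condition \ref{C5} at~$z$.
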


	\begin{proof}
		Suppose the theorem is false. Let $G$ be a counterexample with $|V(G)|$ as small as possible and, subject to this, with $|Q|$ as large as possible. Clearly $G$ must be connected; otherwise, by the minimality of $G$, each component could be colored separately.
		
		\begin{claim}\label{claim-two-connected}
			$G$ is $2$-connected.
		\end{claim}  
		\begin{proof}
			Assume to the contrary that $G$ is not $2$-connected, and let $x$ be a cut vertex. Then $G$ can be written as the union of two induced subgraphs $G_1$ and $G_2$ with $V(G_1)\cap V(G_2)=\{x\}$ and $V(G_1)\cup V(G_2)=V(G)$. Without loss of generality, assume that $|P \cap V(G_1)| \geq |P \cap V(G_2)|$. Set $P_1 = P \cap V(G_1)$ and 
			\[
			P_2 =
			\begin{cases}
				\{x\}, & \text{if } P \cap V(G_2) = \emptyset, \\[6pt]
				P \cap V(G_2), & \text{otherwise}.
			\end{cases}
			\]
			We define two vertices $z_1$ and $z_2$ as follows. If $z\in V(G_1)$, then set $z_1 = z$ and choose $z_2 \in V(B_{G_2}) \setminus (P_2 \cup Q_2)$. If $z \notin V(G_1)$, then set $z_2 = z$ and choose $z_1 \in V(B_{G_1}) \setminus P_1$, making sure that $z_1 \notin Q$ unless $V(B_{G_1}) \setminus (P \cup Q) = \emptyset$.
			Let $Q_1=(Q \cap V(G_1)) \setminus \{z_1\}$ and $Q_2 = Q \setminus (Q \cap V(G_1))$.
			
			By the minimality of $G$, there exists a coloring $\phi_1$ valid for  $(G_1, P_1, Q_1, z_1, \eta|_{P_1})$ and a coloring $\phi_2$ valid for $(G_2, P_2, Q_2, z_2, (\eta \cup \phi_1)|_{P_2})$. Note that if $\phi_1(x)=1$, then $\phi_2(v)=2$ for every $v \in N_{G_2}(x)$. 
			In most cases, it is straightforward to check that $\phi_1 \cup \phi_2$ is valid 
			for $(G,P,Q,z,\eta)$. We therefore omit the details of this verification and only 
			show that $\phi(z_1)=2$ in the case where $V(B_{G_1}) \setminus (P \cup Q) = \emptyset$ 
			and $z_1 \in Q$. This case only happens when $B_{G_1}$ is a cycle containing four vertices, three of which are in $P$ (hence we do not need to verify \ref{C4}), and the rest one is in $Q$. Without loss of generality, assume that $B_{G_1}=v_1v_2v_3v_4v_1$, $P=\{v_1,v_2,v_3\}$ and $v_4 \in Q$. So $z_1=v_4$ by our choice. By the condition of $\eta$, either $\eta(v_1)=1$ or $\eta(v_3)=1$, in either case we will have $\phi(z_1) = \phi(v_4)=2$ by \ref{C3} as $v_4$ is the neighbor of both $v_1$ and $v_3$.
		\end{proof}
		
		By Claim~\ref{claim-two-connected}, we have that $B_G$ is a cycle. We then have the following claim.
		
		\begin{claim}\label{claim-chord}
			$B_G$ has no chord unless $|P|=3$, in which case the chord must be incident with the middle vertex of $G[P]$.
		\end{claim}
		\begin{proof}
			Assume to the contrary that $xy$ is a chord of $B_G$. If $|P|=3$,  assume further that neither $x$ nor $y$ is the middle vertex of $G[P]$. Let $G_1$ and $G_2$ be induced subgraphs of $G$ with $V(G_1)\cap V(G_2)=\{x,y\}$ and $V(G_1)\cup V(G_2)=V(G)$. By the assumption on $xy$, we may assume  without loss of generality that $P \subseteq V(G_1)$. Set $P_2=\{x,y\}$ and define $Q_1,Q_2,z_1,z_2$ as in the proof of Claim~\ref{claim-two-connected}.

			By the minimality of $G$, there exists a coloring $\phi_1$ valid for  $(G_1, P, Q_1, z_1, \eta)$ and a coloring $\phi_2$ valid for $(G_2, P_2, Q_2, z_2, \phi_1|_{P_2})$. We shall verify that $\phi=\phi_1 \cup \phi_2$ is valid for $(G,P,Q,z,\eta)$.
			
			First, for each $u \in P$, since $P \subsetneq V(G_1)$ and $\phi_1$ satisfies \ref{C1}, we have $\phi(u)=\phi_1(u)=\eta(u)$. For a vertex $v \in Q$, if $v \in Q_1 \cup Q_2$, then clearly $\phi(v)=2$ by \ref{C1} for both $\phi_1$ and $\phi_2$. 
			Otherwise $v \notin Q_1 \cup Q_2$, which can occur only when $V(B_{G_1}) = P \cup \{v\}$ and $v \in Q$ (with $z \notin V(G_1)$), in which case we set $z_1 = v$. 
			By the same argument as in Claim~\ref{claim-two-connected}, we then have $\phi(v)=\phi(z_1)=2$. 
			Thus $\phi$ satisfies \ref{C1}. 
			
			Next, since $\phi_1$ and $\phi_2$ satisfy \ref{C2}, the sets $\phi_1^{-1}(1)$ and $\phi_2^{-1}(1)$ induce linear forests, while $\phi_1^{-1}(2)$ and $\phi_2^{-1}(2)$ induce forests. Moreover, as $\phi_2$ satisfies \ref{C3}, there are no edges between $\phi_1^{-1}(1)$ and $\phi_2^{-1}(1)\setminus V(G_1)$. 
			Hence $\phi^{-1}(1)=\phi_1^{-1}(1)\cup \phi_2^{-1}(1)$ also induces a linear forest. 
			On the other hand, $\phi^{-1}(2)=\phi_1^{-1}(2)\cup \phi_2^{-1}(2)$ induces a forest, since the two forests $G[\phi_1^{-1}(2)]$ and $G[\phi_2^{-1}(2)]$ intersect in a path.

			Suppose $u \in P$ and $\phi(u)=1$. Let $v \in N_G(u)\setminus P$. If $v \in V(G_1)$, then $\phi(v)=\phi_1(v)=2$ as $\phi_1$ satisfies \ref{C3}. Otherwise $v \in V(G_2)\setminus V(G_1)$, implying $u \in P_2=\{x,y\}$, hence $\phi(v)=\phi_2(v)=2$ as $\phi_2$ also satisfies \ref{C3}. Therefore, $\phi$ satisfies \ref{C3}.
			
			Suppose, to the contrary, that \ref{C4} fails for some $u \in P$ with $\eta(u)=1$ where $|P|\leq 2$.  Then there exists a $\phi$-monochromatic path $R$ of color $2$ joining $w$ and $v$, where $v \in N_G(u)\cap Q$ and $w\neq v$ is a boundary neighbor of $u$. Let $R_i = R\cap G_i$ for $i=1,2$. 
			Since both $\phi_1$ and $\phi_2$ satisfy \ref{C4}, $R$ is neither contained entirely in $G_1$ nor in $G_2$, so $V(R_i)\setminus\{x,y\}\neq\emptyset$ for each $i$. Thus if $\{w,v\}\subsetneq V(G_1)$, then $\{x,y\}\subsetneq V(R)$, implying $\phi_1(x)=\phi_1(y)=2$. Hence $[R_1\setminus E(R_2)]\cup xy$ is a $\phi_1$-monochromatic path of color $2$ joining $w$ and $v$, a contradiction. Similarly, it is impossible that $\{w,v\}\subsetneq V(G_2)$. Thus assume $v\in V(G_j)$ for some $j \in\{1,2\}$ and $w\in V(G_{3-j})$, which forces $u\in\{x,y\}$. 
			Without loss of generality, assume $u=y$. Hence $\eta(y)=1$, and therefore $x\in V(R_1)\cap V(R_2)$ with $\phi_1(x)=\phi_2(x)=2$. Consequently, the path $R_j$ is a 
			$\phi_j$-monochromatic path of color $2$ joining $v$ and $x$, which implies 
			that $\phi_j$ violates \ref{C4}, since $x$ is a boundary neighbor of $u$ in $G_j$. 
			This is a contradiction. Therefore, \ref{C4} holds.

			Finally, it is straightforward to verify that $\phi$ satisfies \ref{C5}, since $z_1=z$ if $z\in V(G_1)$ and $z_2=z$ otherwise, and both $\phi_1$ and $\phi_2$ satisfy \ref{C5}.
			
			This completes the proof of this claim.
		\end{proof}
		
		\begin{claim}\label{claim-2-chord} 
			For each internal vertex $s$ of $G$, if $s$ has two neighbors $x,y\in V(B_G)$, then neither $x$ nor $y$ belongs to $Q$, unless $|P|=3$, in which case one of $x,y$ lies in $Q$ and the other is the middle vertex of $G[P]$.
		\end{claim}
		
		\begin{proof}
			Suppose to the contrary, and without loss of generality, $x \in Q$ and $y$ is not the middle vertex of $G[P]$. The path $xsy$ divides $G$ to two induced graph $G_1$ and $G_2$ with $V(G_1)\cap V(G_2) =\{x,s,y\}$ and $V(G_1) \cup V(G_2)= V(G)$. Without loss of generality, we may assume that $P \in V(G_1)$. Let $Q_1=Q \cap V(G_1)$ and  $Q_2 = Q \cap (V(G_2) \setminus \{y\})$.
			We define two vertices $z_1$ and $z_2$ as follows. If $z\in V(G_1)$, then set $z_1 = z$ and choose $z_2 \in V(B_{G_2}) \setminus (Q_2 \cup \{s, y\})$. If $z \notin V(G_1)$, then set $z_1=s$ and $z_2 = z$.
			
			By the minimality of $G$, there is a coloring $\phi_1$ valid for $(G_1,P, Q_1, z_1,\eta)$ and a coloring $\phi_2$ valid for   $(G_2,\{s,y\},Q_2, z_2, \phi_1|_{\{s,y\}})$. We shall verify that $\phi=\phi_1 \cup \phi_2$ is valid for $(G,P,Q,z,\eta)$. The verification of $\phi$ satisfying \ref{C1}, \ref{C3} and \ref{C5} is similar as in the proof of Claim~\ref{claim-chord}, we do not repeat again here and only focus on \ref{C2} and \ref{C4}.

			For \ref{C2}, it is clear that the vertices of color $1$ induce a linear forest.
			If the two forests $G[\phi_1^{-1}(2)]$ and $G[\phi_2^{-1}(2)]$ intersect in a path, then $\phi^{-1}(2)=\phi_1^{-1}(2)\cup \phi_2^{-1}(2)$ also induces a forest.
			Hence, if there were a cycle $C$ colored $2$ in $\phi$, it would follow that $\phi_1(s)=1$ and that $C \cap G_2$ is a $\phi_2$-monochromatic path of color $2$ joining $x$ and $y$.
			This contradicts \ref{C4} for $\phi_2$.
			
			Suppose to the contrary that \ref{C4} does not hold for some $u \in P$ with $\eta(u)=1$. 
			Then there exists a $\phi$-monochromatic path $R$ of color~2 joining $w$ and $v$, 
			where $v \in N_G(u)\cap Q$ and $w\neq v$ is a boundary neighbor of $u$. 
			Let $R_i=R\cap G_i$ for $i=1,2$. 
			Since both $\phi_1$ and $\phi_2$ satisfy \ref{C4}, the path $R$ is neither fully contained in $G_1$ nor in $G_2$.
			
			If $\phi_1(y)=1$ and $\phi_1(s)=2$, then by arguments analogous to those in the 
			proof of Claim~\ref{claim-chord} (with $s$ and $x$ playing the roles of $y$ 
			and $x$ therein, respectively), we arrive at a contradiction.

			If $\phi_1(y)= \phi_1(s) = 1$, then $v$ and $w$ must belong to distinct subgraphs, one in $G_1$ and the other in $G_2$. This implies that $u=y$, hence $y \in P$. But in this case $\phi_1(s)=2$ by \ref{C3}, a contradiction.
			
			Thus we have $\phi_1(y)=2$, which implies $u \neq y$, and hence ${v,w}\subsetneq V(G_1)$.
			It follows that $\phi_1(s)=1$, for otherwise $R_1$ would be a $\phi_1$-monochromatic path of color~2 joining $w$ and $v$.
			Therefore $y \in V(R)$, and consequently $R_2$ is a $\phi_2$-monochromatic path of color~2 joining $x$ and $y$.
			This contradicts \ref{C4}, since $x \in Q_2$, $\phi_2(s)=1$, and $y$ is a boundary neighbor of $s$ (recall that ${s,y}$ served as the set $P$ in the construction of $\phi_2$).
			Hence \ref{C4} holds.
		\end{proof}

		\begin{claim}\label{claim:4-cycle}
			$G$ has no separating $4$-cycles and $B_G$ contains at least $5$ vertices.
		\end{claim}
		\begin{proof}
			Assume to the contary $C=u_1u_2u_3u_4u_1$ is a separating $4$-cycle of $G$. Let $G_1$ (resp.\ $G_2$) be the subgraph induced by $C$ together with all vertices and edges in the exterior (resp.\ interior) of $C$. 
			By the minimality of $G$, there is a coloring $\phi_1$ of $G_1$ valid for $(G_1, P, Q \cap V(G_1),z, \eta)$. By \ref{C2} for $\phi_1$, $C$ is not monochromatic, so without loss of generality, we may assume that $\phi_1(u_1) = 1$ and $\phi_1(u_2) = 2$. Let $G_2' = G_2 - \{u_1\}$, and let  
			\[
			P' = \begin{cases}
				\{u_3\}, & \text{if } \phi_1(u_4)=2, \\
				\{u_3,u_4\}, & \text{otherwise,}
			\end{cases} \text{ and }
			Q' = \begin{cases}
				N_{G_2}(u_1), & \text{if } \phi_1(u_4)=2, \\
				N_{G_2}(u_1) \setminus \{u_4\}, & \text{otherwise.}
			\end{cases}
			\]
			Since $G$ has no triangles, $Q'$ is an independent set in both cases.
			
			By the minimality of $G$, $G_2'$ has a coloring $\phi_2$ valid for $(G'_2, P', Q', z',\phi_1|_{P'})$, where $z'\in V(B_{G'_2})\setminus (P'\cup Q')$. We shall show that $\phi = \phi_1 \cup \phi_2$ is valid for $(G,P,Q, z,\eta)$. It is straightforward to check that $\phi$ satisfies \ref{C1}, \ref{C3}, and \ref{C5}. Moreover, by analogous arguments (with $u_3$ playing the role of $s$) as in the proof of Claim~\ref{claim-2-chord}, one can also verify that \ref{C4} holds for $\phi$. Therefore, it remains to check \ref{C2}.
			
			Note that there are no edges between the vertices of color~1 on $C$ and the vertices of color~1 in $V(G_2)\setminus V(C)$. 
			Indeed, for each $v \in N_{G_2'}(u_1)\setminus V(C)$ we have $v \in Q'$, and hence $\phi(v)=\phi_2(v)=2$ by \ref{C1} for $\phi_2$. 
			If $\phi_1(u_i)=1$ for some $i \in \{3,4\}$, then $u_i \in P'$ by our choice, so for each $v \in N_{G_2'}(u_i)\setminus V(C)$ we also have $\phi(v)=\phi_2(v)=2$ by \ref{C3} for $\phi_2$. 
			Thus the vertices of color~1 under $\phi$ induce a linear forest, since by \ref{C2} both $\phi_1^{-1}(1)$ and $\phi_2^{-1}(1)$ induce linear forests. 
			On the other hand, by analogous arguments (with $u_3$ playing the role of $s$) as in the proof of Claim~\ref{claim-chord}, the vertices of color~2 induce a forest.  
			Hence \ref{C2} holds for $\phi$.
			
			Now, we show the second part of the statement.
			
			Assume to the contrary that $B_G=uvwxu$ is a $4$-cycle. 
			Without loss of generality, we consider the following cases and first extend $\eta$ to a precoloring $\phi_1$ of $B_G$ as follows.
			\begin{itemize}
				\item $P=\{u,x,w\}$ with $\eta(u)=1$ (implying that $\eta(x) = \eta(w) =2$). We color $v$ with $2$.
				\item $P=\{u,x\}$ with $\eta(u)=1$, $\eta(x)=2$, or $P=\{u\}$ with $\eta(u)=1$.
				If $w\in Q$, then $\{v, x\} \cap Q=\emptyset$, we color all remaining uncolored vertices of $B_G$ with $2$. Otherwise, $w \notin Q$, 
				we color $w$ with $1$ and all remaining uncolored vertices of $B_G$ with $2$.
				\item $P=\{u,x\}$ with $\eta(u)=1$, $\eta(x)=1$. 
				We color every vertex in $V(B_G)\setminus P$ with $2$.
				\item Every vertex of $P$ is colored $2$ in $\eta$ and $z=u$. 
				We first color $z$ with $1$, and then color every vertex in $V(B_G)\setminus (P\cup\{z\})$ with $2$.
			\end{itemize} 
			In each case, we relabel the vertices of $B_G$ so that $u_1=u$, $u_2=v$, $u_3=w$, and $u_4=x$. 
			Then, by the same procedure as in the proof of the first part, we can extend $\phi_1$ to a coloring of $G$ that is valid for $(G,P,Q,z,\eta)$, a contradiction. 
		\end{proof} 
		
		In the remainder of this paper, we assume that $B_G=u_0u_1\cdots u_ku_0$, and the indices are taken modulo $k+1$. By Claim~\ref{claim:4-cycle}, we have $k \geq 4$.
		
		\begin{claim}\label{z1}
			\label{claim:BBB}
			There is no $i \in \{0, 1, \ldots, k\}$ such that $\{u_{i-1}, u_i, u_{i+1}\} \cap (P \cup Q) = \emptyset$. 
		\end{claim} 
		\begin{proof}  
			Without loss of generality, assume to the contrary that none of $\{u_0,u_1,u_2\}$ belongs to $P\cup Q$. 
			Note that any valid coloring of $(G,P,Q',z,\eta)$ is also valid for $(G,P,Q,z,\eta)$ whenever $Q\subsetneq Q'$. 
			Thus, by the maximality of $|Q|$, we must have $u_1=z$; otherwise we could add $u_1$ to $Q$, and the resulting set would still be independent by Claim~\ref{claim-chord}. 
			Again, by the maximality of $|Q|$, we have $u_3\in Q$, since otherwise we could add $u_2$ to $Q$. 
			
			Now let $G'=G-\{u_2\}$ and define $Q'=(Q\cup N_G(u_2))\setminus P$. 
			Note that $u_2$ may have a neighbor in $P$, but such a vertex must be the middle vertex of $G[P]$ and is colored $2$ in $\eta$.  
			By Claim~\ref{claim-2-chord} and the fact that $N_G(u_2)$ is an independent set (as $G$ has no triangles), $Q'$ is an independent set. 
			Choose a vertex $z'\in V(B_{G'})\setminus (P\cup Q')$. 
			By the minimality of $G$, there exists a coloring $\phi'$ valid for $(G',P,Q',z',\eta)$. 
			Since $z=u_1\in Q'$, we have $\phi'(z)=2$. 
			Finally, extend $\phi'$ to a coloring $\phi$ of $G$ by setting $\phi(u_2)=1$. 
			This yields a valid coloring of $(G,P,Q,z,\eta)$, a contradiction. 
		\end{proof}
		
		\begin{claim}
			\label{claim-QBQ} 
			There is no $i \in \{0, 1, \ldots, k\}$ such that $u_i \notin P \cup Q$ but $ \{u_{i-1}, u_{i+1}\} \subseteq Q \cup \{v\in P \mid \eta(v)=2\}$.
		\end{claim}
		\begin{proof}  
			Without loss of generality, assume to the contrary that $u_1\notin P\cup Q$, and for each $j\in\{0,2\}$ we have either $u_j\in Q$, or $u_j\in P$ with $\eta(u_j)=2$. 
			Let $G' = G - \{u_1\}$ and define $Q' = (Q \cup N_G(u_1)) \setminus P$. 
			By Claim~\ref{claim-2-chord} and the fact that $N_G(u_1)$ is an independent set, 
			it follows that $Q'$ is also an independent set. 
			Set $z'=z$ if $u_1\neq z$, and otherwise let $z'$ be an arbitrary vertex of $V(B_{G'})\setminus (P\cup Q')$. 
			By the minimality of $G$, the graph $G'$ admits a coloring $\phi'$ valid for $(G',P,Q',z',\eta)$. 
			We then extend $\phi'$ to a coloring $\phi$ of $G$ by setting $\phi(u_1)=1$.  
			Since this adds only a single isolated vertex to the induced forest of color~1, the coloring $\phi$ is valid for $(G,P,Q,z,\eta)$, a contradiction. 
		\end{proof}
		
		\begin{claim}
			\label{claim-QBBQ} 
			There is no $i \in \{0, 1, \ldots, k\}$ such that $\{u_i, u_{i+1}\} \cap (P \cup Q) = \emptyset$  but $ \{u_{i-1}, u_{i+2}\} \subseteq Q \cup \{v\in P \mid \eta(v)=2\}$, and $z\in \{u_i,u_{i+1}\}$ or $z \in N_{G}(v)$ for some $v \in P$ with $\eta(v)=1$. 
		\end{claim}
		\begin{proof}  
			Without loss of generality, assume to the contrary that statement is not hold for $i=1$.  So   neither $u_1$ nor $u_2$ lies in $P\cup Q$, and for each $j \in\{0, 3\}$ we have either $u_j\in Q$ or $u_j\in P$ with $\eta(u_j)=2$. We also have that $z \in \{u_1, u_2\}$ or $z$ is the neighbor of some $v \in P$ with $\eta(v)=1$. Since at least one of $\{u_1,u_2\}$ is not $z$, we may further assume that $z \neq u_2$.  
			Let $f$ be the non-boundary face incident with both $u_1$ and $u_{2}$, and let $x$ and $y$ be the neighbors of $u_1$ and $u_2$, respectively, incident with $f$.

			Let $G' = G - \{u_1,u_2\}$ and  define $Q' = (Q \cup N_G(u_1)\cup N_G(u_2))\setminus (P \cup \{y\})$. We claim that $Q'$ is an independent set. Indeed, by Claim~\ref{claim:4-cycle} and that $G$ has no triangles, the subgraph induced by $N_G(u_1)\cup N_G(u_2)$ contains at most one edge, namely $xy$, but $y$ was deleted in the construction of $Q'$. Moreover, by Claim~\ref{claim-2-chord}, there are no edges between $(N_G(u_1)\cup N_G(u_2))\setminus P$ and $Q$ (note that $\{u_0,u_3\}\subseteq P\cup Q$).  
			
			Set $z' = y$ if $y\notin P$, and otherwise let $z'$ be an arbitrary vertex of $V(B_{G'})\setminus (P\cup Q')$.
			By the minimality of $G$, the graph $G'$ admits a coloring $\phi'$ 
			valid for $(G',P,Q',z',\eta)$. Extend $\phi'$ to a coloring $\phi$ of $G$ by setting 
			$\phi(u_1)=\phi(u_2)=1$. If $z=u_1$, then \ref{C5} holds since $\phi(z)=1$ and $u_2$ is the unique neighbor of $z$ colored $1$. If $z\neq u_1$ (and recall $z\neq u_2$), then $z$ is a neighbor of some 
			$v\in P$ with $\eta(v)=1$ by the assumption. Hence $\phi(z)=\phi'(z)=2$ by \ref{C3} for $\phi'$, so \ref{C5} also holds. It is routine to verify that $\phi$ satisfies \ref{C1}--\ref{C4} as well, and is therefore valid for $(G,P,Q,z,\eta)$, a contradiction. 
		\end{proof}
		
		In the sequel, we may assume without loss of generality that $z=u_2$.
		
		\begin{claim}\label{z-no-Q-neighbor}
			$\{u_1, u_3\} \cap Q = \emptyset$, that is, $z$ has no boundary neighbors in $Q$.
		\end{claim}  
		\begin{proof}  
			Without loss of generality, assume to the contrary that $u_3\in Q$. 
			By Claim~\ref{claim-QBQ}, we have $u_1\notin Q\cup\{v\in P \mid\eta(v)=2\}$. 
			Hence either $u_1\notin P\cup Q$, or $u_1\in P$ with $\eta(u_1)=1$. 
			
			If $u_1\notin P\cup Q$, then by the maximality of $|Q|$ it follows that $u_0\in Q$, 
			since otherwise we could add $u_1$ to $Q$. 
			This, however, contradicts Claim~\ref{claim-QBBQ} with $i=1$.  
			
			Therefore, we must have $u_1\in P$ with $\eta(u_1)=1$.
			As $u_3\in Q$ and $Q$ is independent, we have $u_4\notin Q$. 
			Since $k\ge 4$, $u_4\neq u_0$.
			
			If $u_4\in P$, then it must be that $k=4$ and $P=\{u_0,u_1,u_4\}$. Thus $\eta(u_4)=\eta(u_0)=2$, $Q= \{u_3\}$. Let $P'=\{u_3,u_4\}$, $Q'= N_{G}(u_1)$  and $\eta'$ be a precoloring of $P'$ with $\eta'(u_3) = \eta'(u_4) =2$. We also choose $z'\in V(B_{G'})\setminus (P'\cup Q')$. By the minimality of $G$, there a coloring $\phi'$ valid for $(G-\{u_1\}, P',  Q', z', \eta'\})$. 
			Extending $\phi'$ to $G$ by coloring $u_1$ with $1$, we obtain a valid coloring of $(G,P,Q,z,\eta)$, a contradiction. 
			
			Therefore, we conclude that $u_4\notin P$. 
			
			By Claim~\ref{claim-QBQ}, $u_5\notin Q\cup\{v\in P \mid \eta(v)=2\}$ 
			(note that indices are taken modulo $k+1$, so if $k=4$ then $u_5=u_0$). 
			Thus either $u_5\notin P$ or $u_5\in P$ with $\eta(u_5)=1$. 
			
			If $u_5 \notin P$, then by the maximality of $|Q|$ we have $u_6 \in Q$, 
			since otherwise we could add $u_6$ to $Q$. 
			This, however, contradicts Claim~\ref{claim-QBBQ} with $i=4$.

			We now have $u_1,u_5\in P$ with $\eta(u_1)=\eta(u_5)=1$. 
			Since $k\ge 4$, $u_5 \neq u_1$. 
			On the other hand, if $|P|=3$, then the two ends of $G[P]$ must receive different colors under $\eta$. 
			Thus it must be that $k=4$ (hence $u_5=u_0$) and $P=\{u_0,u_1\}$ with $\eta(u_0)=\eta(u_1)=1$.  
			In this case, let $G'=G-\{u_1\}$ and set $Q'=N_G(u_1)\setminus\{u_0\}$. 
			Define $P'=\{u_0,u_3,u_4\}$ with $\eta'(u_0)=1$ and $\eta'(u_3)=\eta'(u_4)=2$. 
			Choose $z'\in V(B_{G'})\setminus (P'\cup Q')$. 
			By the minimality of $G$, $G'$ admits a coloring $\phi$ valid for $(G',P',Q',z',\eta')$. 
			Extending $\phi$ to $G$ by coloring $u_1$ with $1$, we obtain a valid coloring of $(G,P,Q,z,\eta)$, a contradiction. 
		\end{proof} 
		
		Recall that $u_2=z\notin P\cup Q$. By Claim~\ref{claim:BBB}, we have $\{u_1,u_3\}\cap (P\cup Q)\neq\emptyset$, and hence by Claim~\ref{z-no-Q-neighbor}, $\{u_1,u_3\}\cap P\neq\emptyset$. 
		Without loss of generality, assume $u_1\in P$. Since $u_3\notin Q$, the maximality of $|Q|$ implies that $u_4\in Q$, for otherwise we could add $u_3$ to $Q$. Let $G' = G - \{u_3\}$ and define $Q' = (Q \cup N_G(u_3)) \setminus P$. By Claims~\ref{claim-chord},~\ref{claim-2-chord} and 
		the facts that  $u_1\notin Q$ and $N_G(u_3)$ is an independent set, 
		it follows that $Q'$ is also an independent set. 
		Let $z'$ be an arbitrary vertex of $V(B_{G'})\setminus (P\cup Q')$. 
		By the minimality of $G$, the graph $G'$ admits a coloring $\phi'$ valid for $(G',P,Q',z',\eta)$. 
		We then extend $\phi'$ to a coloring $\phi$ of $G$ by setting $\phi(u_3)=1$. Since this adds only a single isolated vertex to the induced forest of color~1 and $\phi(z)=2$, the coloring $\phi$ is valid for $(G,P,Q,z,\eta)$, a contradiction.  
		
		This completes the proof of Theorem~\ref{main-thm}. 
	\end{proof}
	
	\bibliographystyle{abbrv}
	\bibliography{reference}

\begin{thebibliography}{10}

\bibitem{AH1977}
K.~Appel and W.~Haken.
\newblock Every planar map is four colorable. {I}. {D}ischarging.
\newblock {\em Illinois J. Math.}, 21(3):429--490, 1977.

\bibitem{AHK1977}
K.~Appel, W.~Haken, and J.~Koch.
\newblock Every planar map is four colorable. {II}. {R}educibility.
\newblock {\em Illinois J. Math.}, 21(3):491--567, 1977.

\bibitem{Borodin1979}
O.~V. Borodin.
\newblock A proof of {B}. {G}r\"{u}nbaum's conjecture on the acyclic
  {$5$}-colorability of planar graphs.
\newblock {\em Dokl. Akad. Nauk SSSR}, 231(1):18--20, 1976.

\bibitem{BG2001}
O.~V. Borodin and A.~N. Glebov.
\newblock On the partition of a planar graph of girth 5 into an empty and an
  acyclic subgraph.
\newblock {\em Diskretn. Anal. Issled. Oper. Ser. 1}, 8(4):34--53, 2001.

\bibitem{CKW1968}
G.~Chartrand, H.~V. Kronk, and C.~E. Wall.
\newblock The point-arboricity of a graph.
\newblock {\em Israel J. Math.}, 6:169--175, 1968.

\bibitem{CY2017}
H.~Choi and Y.~S. Kwon.
\newblock On {$t$}-common list-colorings.
\newblock {\em Electron. J. Combin.}, 24(3):Paper No. 3.32, 10, 2017.

\bibitem{DMP2017}
F.~Dross, M.~Montassier, and A.~Pinlou.
\newblock Partitioning a triangle-free planar graph into a forest and a forest
  of bounded degree.
\newblock {\em European J. Combin.}, 66:81--94, 2017.

\bibitem{FS2024}
C.~Feghali and R.~\v{S}\'{a}mal.
\newblock Decomposing a triangle-free planar graph into a forest and a subcubic
  forest.
\newblock {\em European J. Combin.}, 116:Paper No. 103878, 6, 2024.

\bibitem{Grotzsch}
H.~Gr\"{o}tzsch.
\newblock Zur {T}heorie der diskreten {G}ebilde. {VII}. {E}in {D}reifarbensatz
  f\"{u}r dreikreisfreie {N}etze auf der {K}ugel.
\newblock {\em Wiss. Z. Martin-Luther-Univ. Halle-Wittenberg Math.-Natur.
  Reihe}, 8:109--120, 1958/59.

\bibitem{KT2009}
K.-i. Kawarabayashi and C.~Thomassen.
\newblock Decomposing a planar graph of girth 5 into an independent set and a
  forest.
\newblock {\em J. Combin. Theory Ser. B}, 99(4):674--684, 2009.

\bibitem{MO2015}
M.~Montassier and P.~Ochem.
\newblock Near-colorings: non-colorable graphs and {NP}-completeness.
\newblock {\em Electron. J. Combin.}, 22(1):Paper 1.57, 13, 2015.

\bibitem{Poh1990}
K.~S. Poh.
\newblock On the linear vertex-arboricity of a planar graph.
\newblock {\em J. Graph Theory}, 14(1):73--75, 1990.

\bibitem{RW2008}
A.~Raspaud and W.~Wang.
\newblock On the vertex-arboricity of planar graphs.
\newblock {\em European J. Combin.}, 29(4):1064--1075, 2008.

\bibitem{Thomassen1995}
C.~Thomassen.
\newblock Decomposing a planar graph into degenerate graphs.
\newblock {\em J. Combin. Theory Ser. B}, 65(2):305--314, 1995.

\bibitem{Thomassen2001}
C.~Thomassen.
\newblock Decomposing a planar graph into an independent set and a 3-degenerate
  graph.
\newblock {\em J. Combin. Theory Ser. B}, 83(2):262--271, 2001.

\bibitem{Zhu2020}
X.~Zhu.
\newblock A refinement of choosability of graphs.
\newblock {\em J. Combin. Theory Ser. B}, 141:143--164, 2020.

\end{thebibliography}
	
\end{document}